\documentclass[a4paper,11pt]{amsart}

\usepackage{fouriernc}
\usepackage[T1]{fontenc}

\usepackage{graphicx}
\usepackage{amsmath}
\usepackage{amssymb}
\usepackage[active]{srcltx}
\usepackage{hyperref}
\usepackage{bbm}

\usepackage{color}
\definecolor{red}{rgb}{1,0,0}

\definecolor{gre}{rgb}{0,0.5,0}

\definecolor{blu}{rgb}{0,0,1}

\definecolor{pur}{rgb}{0.6,0.16,0.9}

\usepackage{geometry}
 \geometry{
 a4paper,
 total={170mm,257mm},
 left=20mm,
 top=20mm,
 }

%
\newtheorem{prop}{Proposition}%

\theoremstyle{definition}

\theoremstyle{remark}
\theoremstyle{plain}


%
\def\CC{{\mathbb C}}

\def\NN{{\mathbb N}}
\def\QQ{{\mathbb Q}}

\def\ZZ{{\mathbb Z}}

\def\e{\mathrm{e}}
\def\i{\mathrm{i}}

\def\vol{\operatorname{vol}}

\title{Delone sets generated by square roots}
\author{Jens Marklof}
\address{Jens Marklof, School of Mathematics, University of Bristol, Bristol BS8 1TW, U.K.\newline \rule[0ex]{0ex}{0ex} \hspace{8pt}{\tt j.marklof@bristol.ac.uk}}
\date{31 May 2019/21 Feb 2020; to appear in American Math. Monthly}
\subjclass[2010]{11K06, 52C15}

\begin{document}

\begin{abstract}
Delone sets are locally finite point sets, such that (a) any two points are separated by a given minimum distance, and (b) there is a given radius so that every ball of that radius contains at least one point. Important examples include the vertex set of Penrose tilings and other regular model sets, which serve as a mathematical model for quasicrystals. In this note we show that the point set given by the values $\sqrt n\, \e^{2\pi\i \alpha \sqrt n}$ with $n=1,2,3,\ldots$ is a Delone set in the complex plane, for any $\alpha>0$. This complements Akiyama's recent observation (see arxiv.org/abs/1904.10815) that $\sqrt n\, \e^{2\pi\i \alpha n}$ with $n=1,2,3,\ldots$ forms a Delone set, if and only if $\alpha$ is badly approximated by rationals. A key difference is that our setting does not require Diophantine conditions on $\alpha$.
\end{abstract}

\maketitle

Consider an infinite sequence of real numbers $\xi_1,\xi_2,\xi_3,$ etc. We are interested in the patterns  generated by the complex numbers
\begin{equation}
z_n = \sqrt{n}\; \e^{2\pi\i \xi_n},
\end{equation}
see Figure \ref{fig1}.
Before discussing the specific case $\xi_n=\alpha\sqrt n$, we keep things general and explore the relationship between the distribution of the sequence $(\xi_n)_n$ mod 1 and the distribution of $(z_n)_n$ in the complex plane $\CC$. 

A sequence $(\xi_n)_n$ of real numbers is said to be {\em uniformly distributed mod $1$} if for any real numbers $a < b$ with $b-a\leq 1$ we have that
\begin{equation}\label{UD}
\lim_{N\to\infty} \frac{\#\{ n\leq N : \xi_n\in [a,b)+\ZZ \}}{N} = b-a. 
\end{equation}
Classic examples of sequences that are uniformly distributed mod $1$ are $\xi_n=\alpha n^k$ for any $\alpha\not\in\QQ$ and $k\in\NN$, and $\xi_n=\alpha n^\beta$ for any $\alpha\neq 0$ and non-integer $\beta>0$; see Theorem 3.2 and Exercise 3.9 in \cite[Ch.~ 1]{KN}. A sequence $z_1,z_2,\ldots$ of complex numbers is said to have {\em asymptotic density $\rho$}, if for any bounded set $B$ with boundary of Lebesgue measure zero, we have
\begin{equation}\label{AD}
\lim_{R\to\infty} \frac{\#\{ n\in\NN : z_n\in RB \}}{R^2} = \rho \vol B,
\end{equation}
where $\vol$ denotes the Lebesgue measure in $\CC$. 

\begin{figure}
\begin{center}
\includegraphics[width=0.49\textwidth]{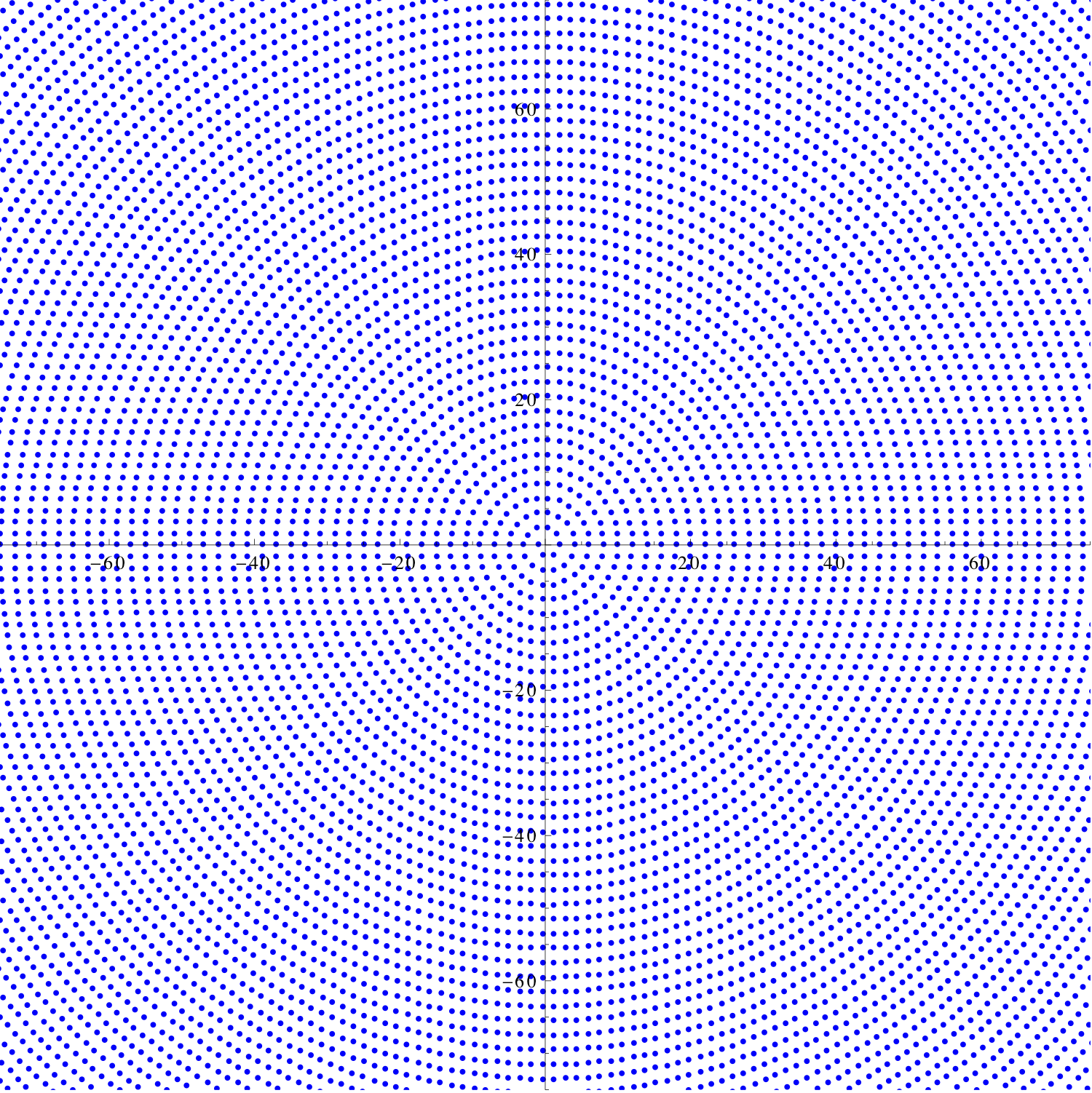}
\includegraphics[width=0.49\textwidth]{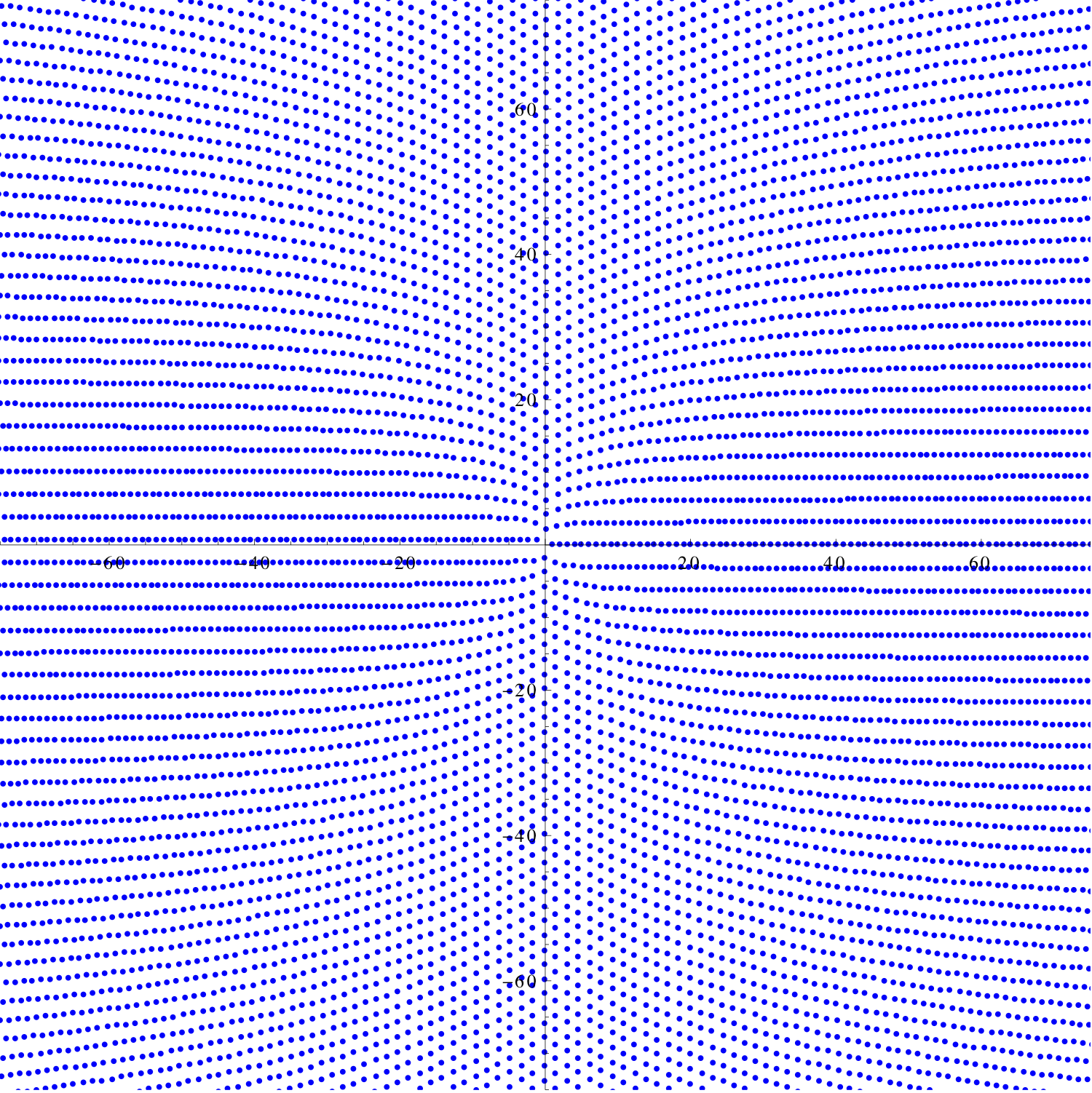}
\includegraphics[width=0.49\textwidth]{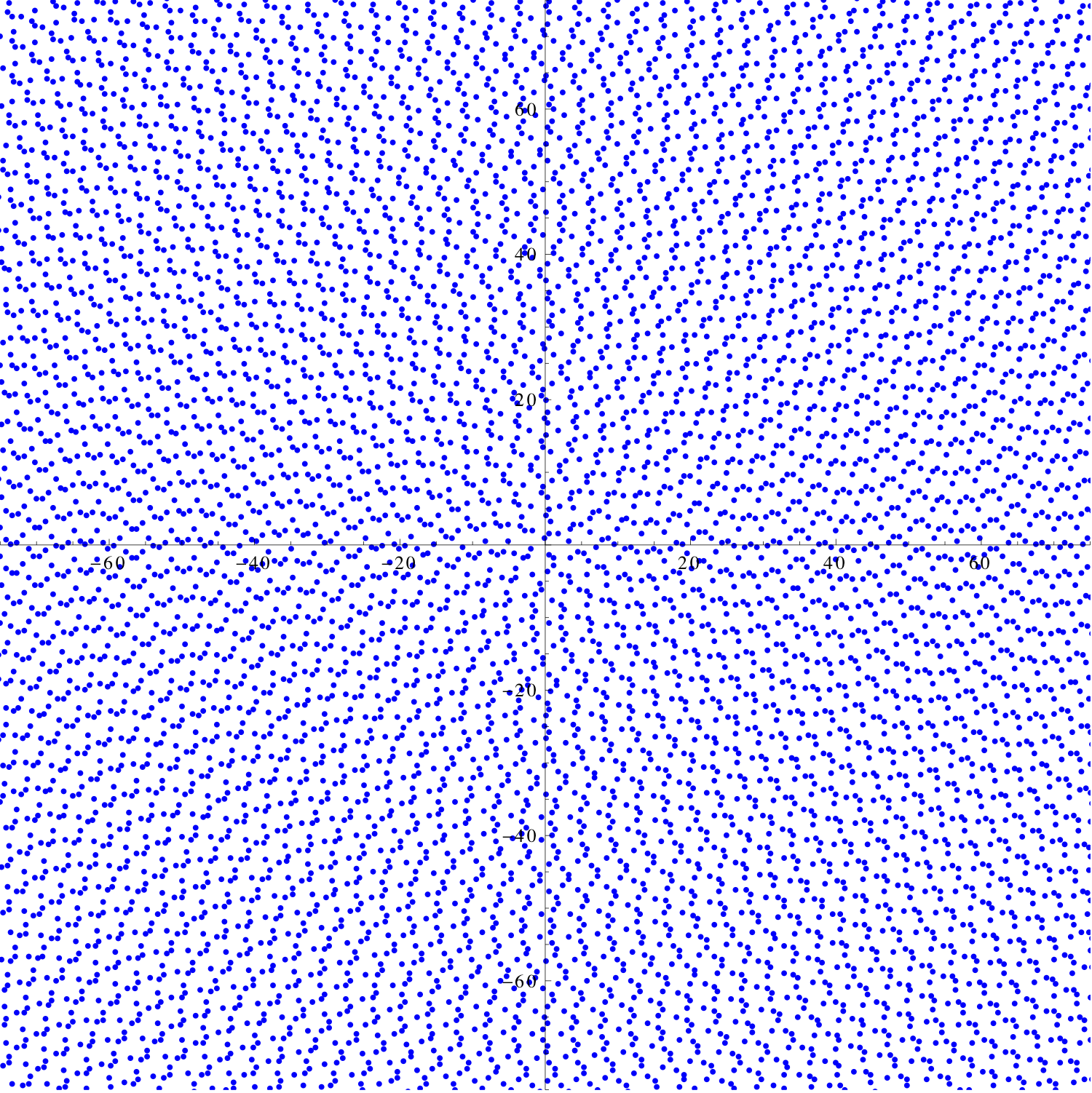}
\includegraphics[width=0.49\textwidth]{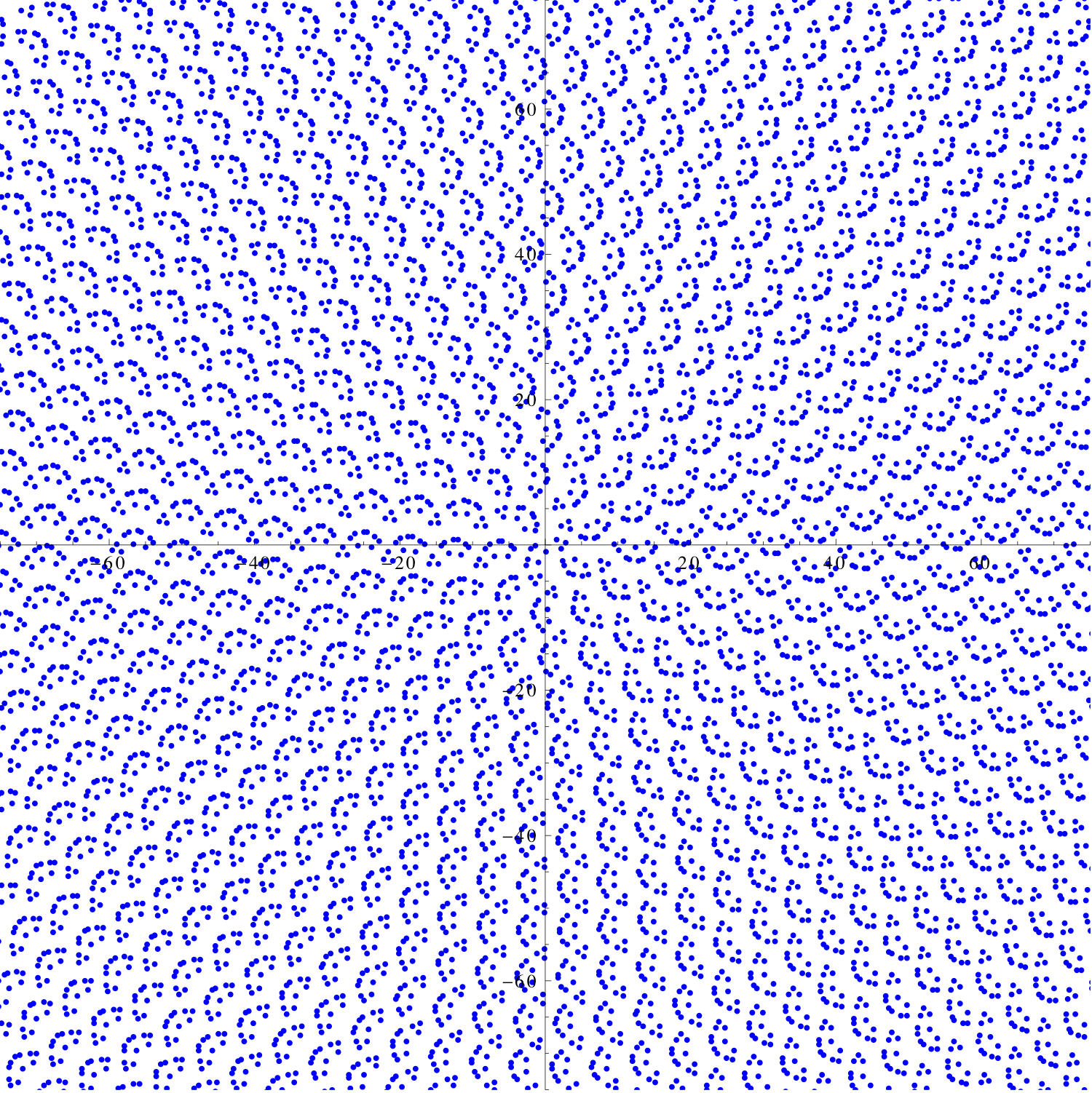}
\end{center}
\caption{The point sequence $(z_n)_n$ with $\xi_n=\alpha \sqrt{n}$ and $\alpha=\frac12,1,\sqrt{\pi},\sqrt{\frac{1+\sqrt 5}{2}}$ (clockwise, from top left).} \label{fig1}
\end{figure}

\begin{figure}
\begin{center}
\includegraphics[width=0.49\textwidth]{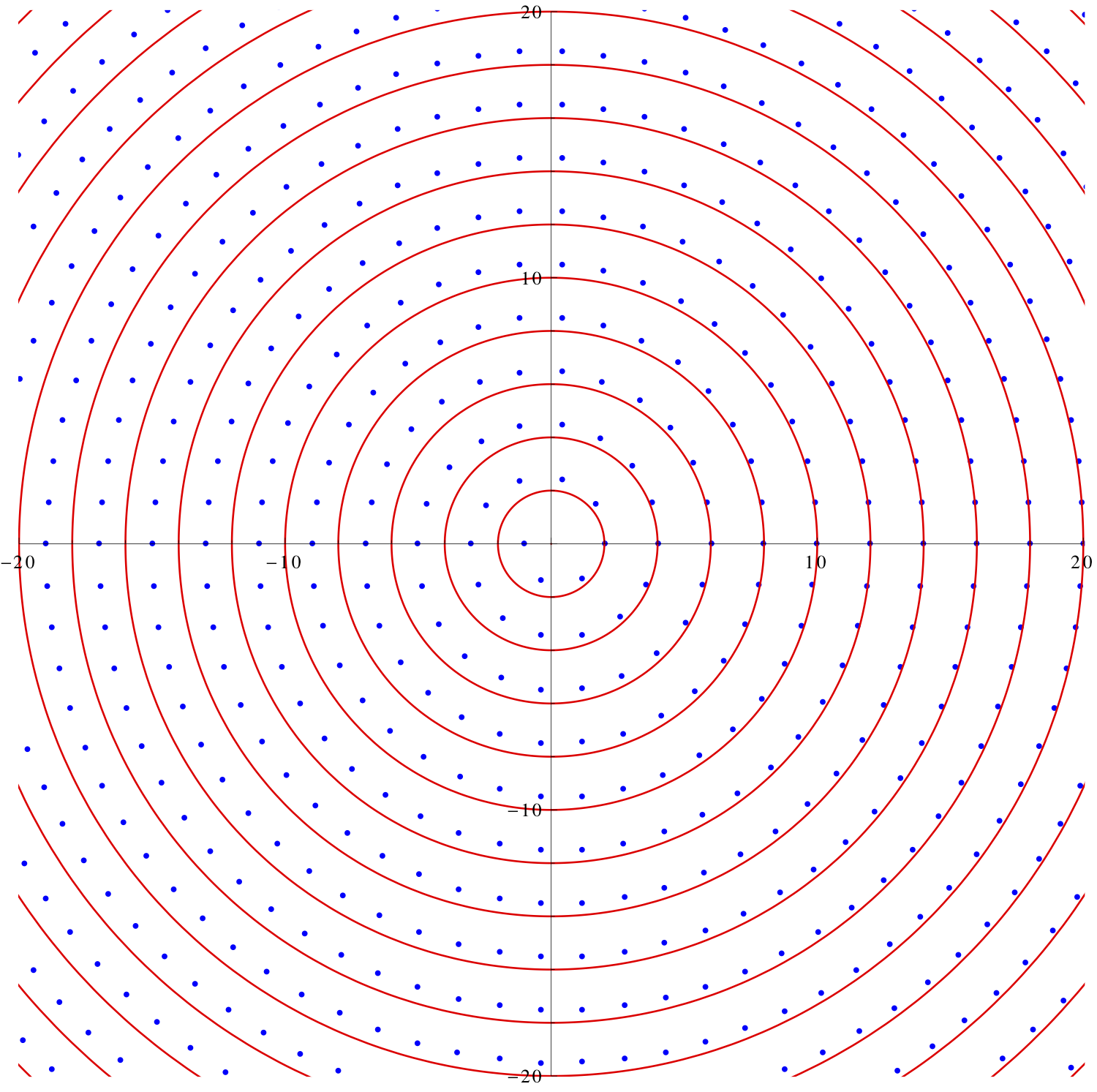}
\includegraphics[width=0.49\textwidth]{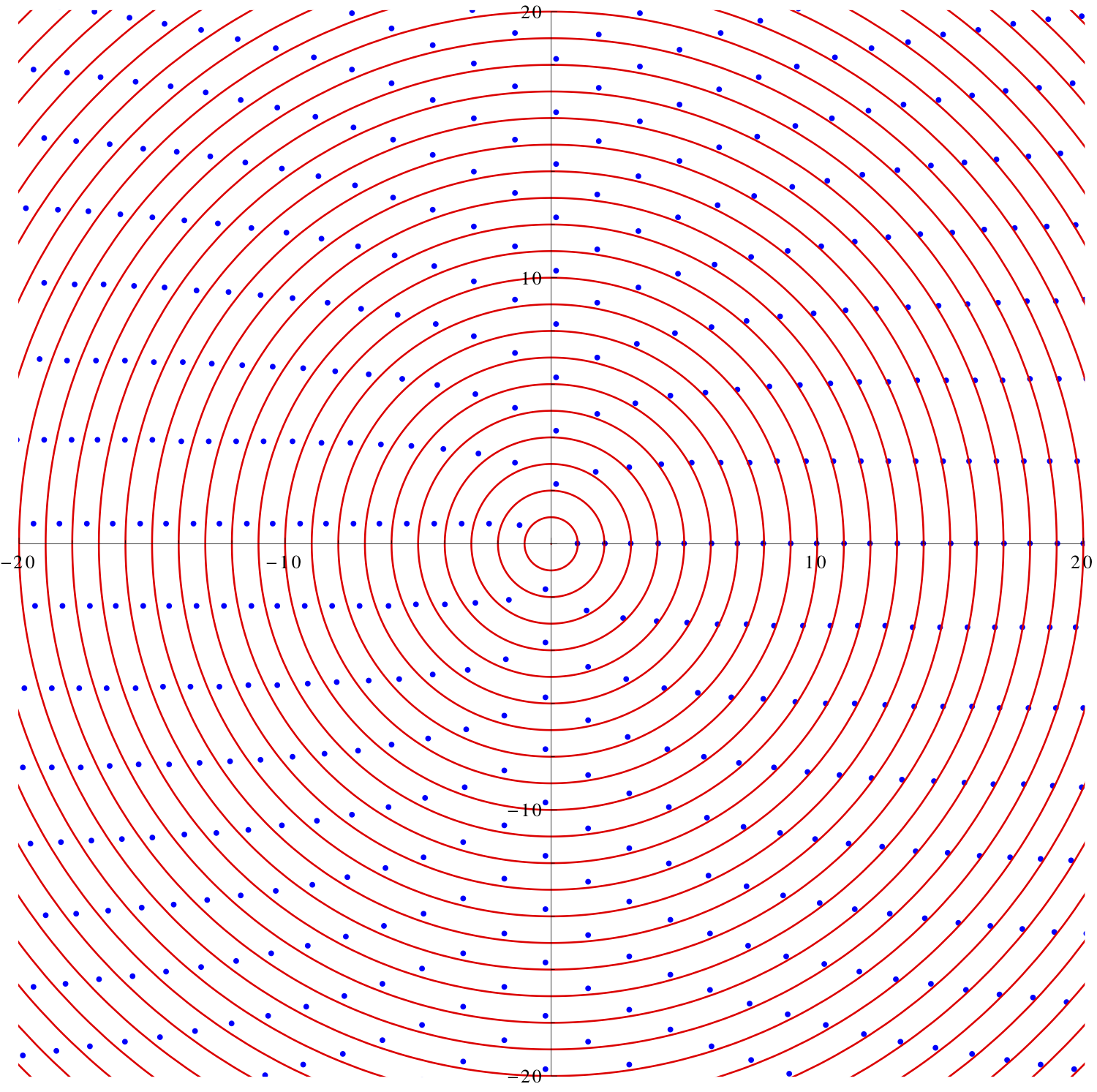}
\includegraphics[width=0.49\textwidth]{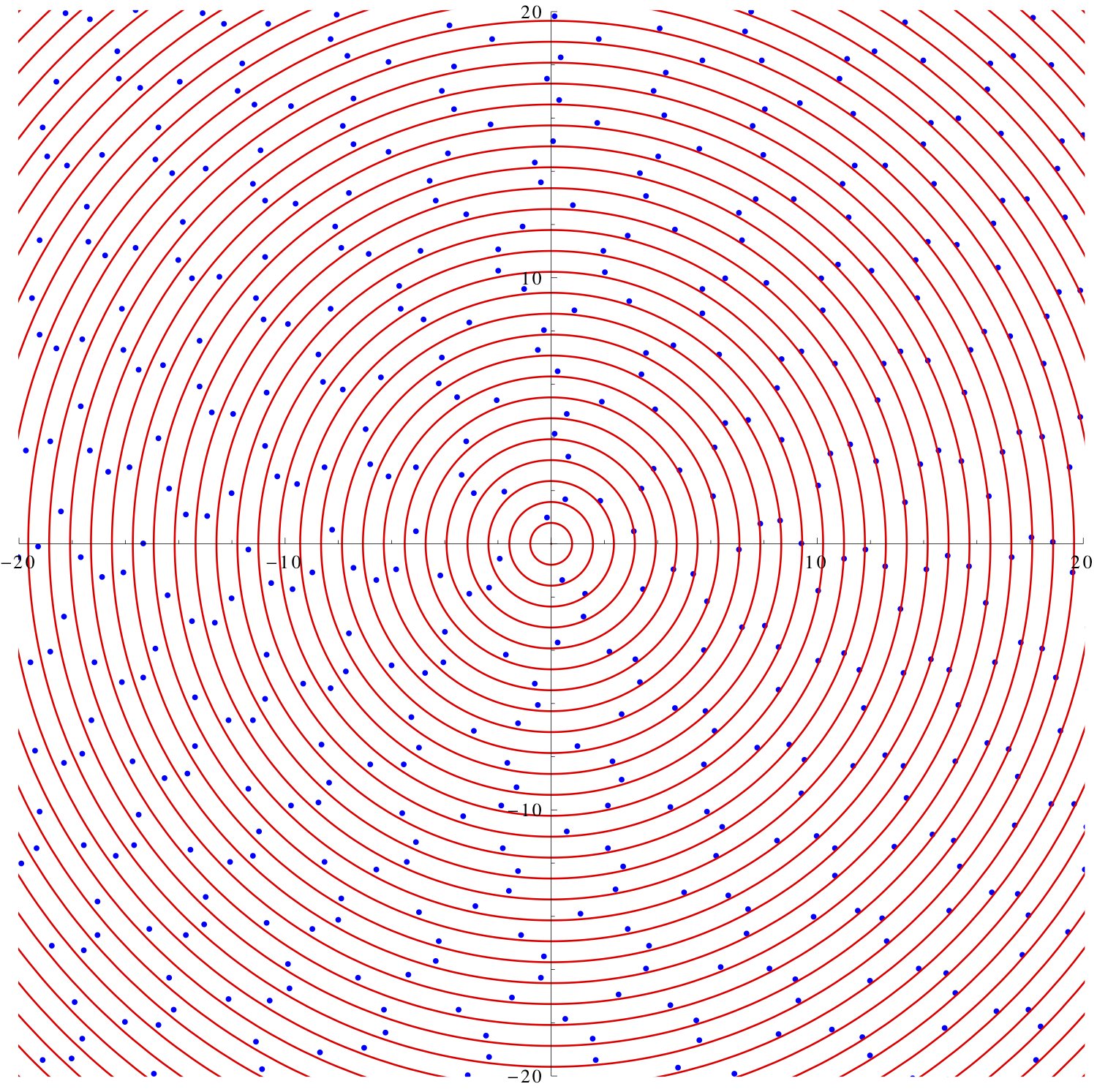}
\includegraphics[width=0.49\textwidth]{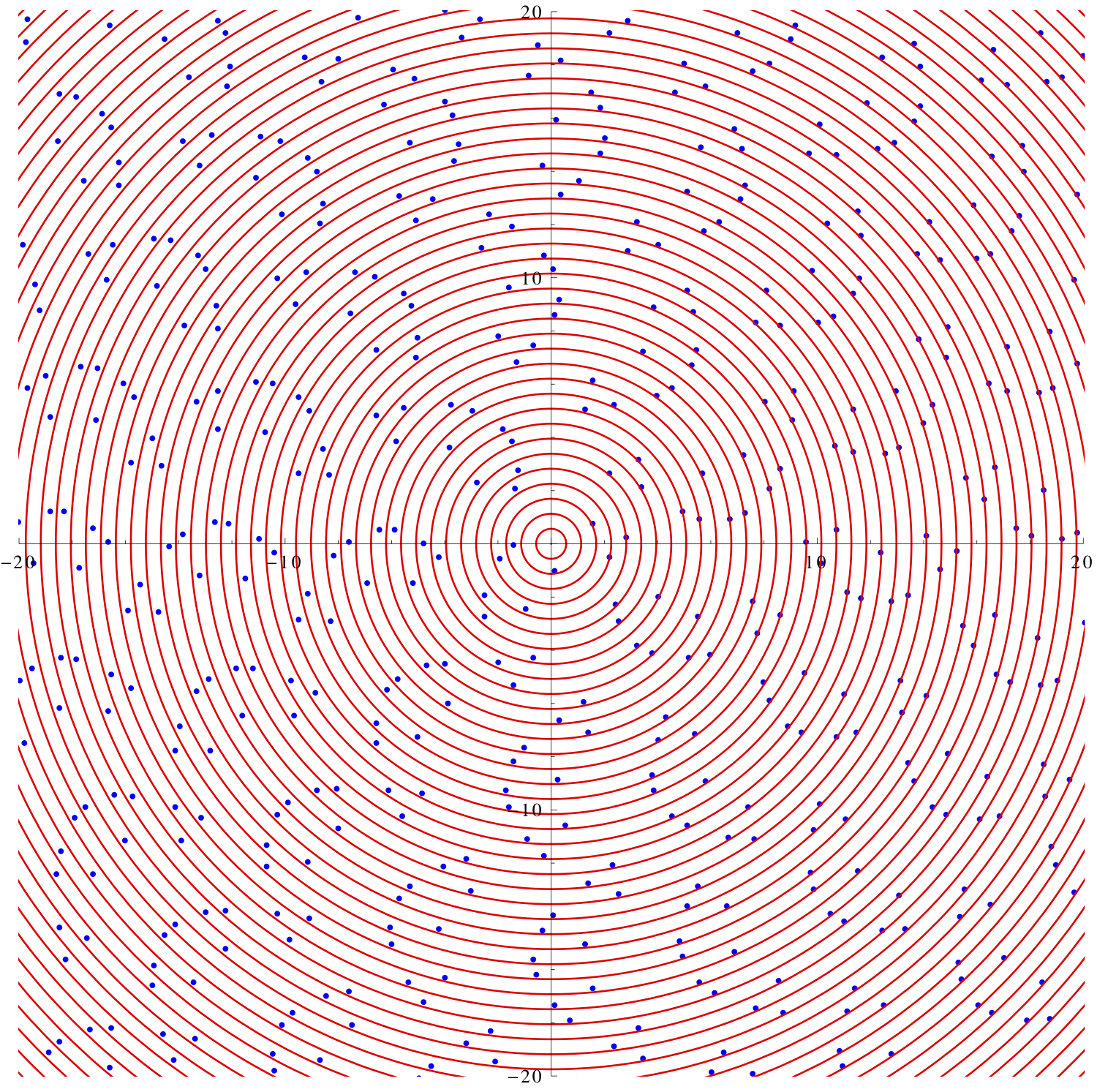}
\end{center}
\caption{A zoom-in of each of the point patterns in Figure \ref{fig1}, overlayed with annuli of width $h=1/\alpha$. The points in an annulus of radius $R$ are approximately spaced by $2\pi R\times\frac{\alpha}{2R}=\pi\alpha$; see \eqref{deltaj}.} \label{fig2}
\end{figure}

Our first observation is the following. 
\begin{prop}\label{Adens}
Let $(\xi_n)_n$ be a sequence of real numbers and let $(z_n)_n$ be the corresponding sequence given by $z_n = \sqrt{n}\; \e^{2\pi\i \xi_n}$. Then $(z_n)_n$ has asymptotic density $\rho=\pi^{-1}$ if and only if $(\xi_n)_n$ is uniformly distributed mod $1$.
\end{prop}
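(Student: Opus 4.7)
The plan is to evaluate \eqref{AD} on a convenient family of annular sectors, match the count against \eqref{UD}, and then handle arbitrary test sets $B$ by approximation. The elementary identity driving the whole argument is the following: for $0\le r_1<r_2$ and $0\le b-a\le 1$, let
\[
S = S_{r_1,r_2,a,b} := \{r\,\e^{2\pi\i\theta} : r_1\le r<r_2,\ \theta\in[a,b)+\ZZ\}.
\]
Then the condition $z_n \in RS$ is equivalent to $R^2 r_1^2 \le n < R^2 r_2^2$ together with $\xi_n \in [a,b)+\ZZ$, and $\vol(S) = \pi(r_2^2-r_1^2)(b-a)$. Since $\partial S$ is a finite union of arcs and radial segments, $\vol(\partial S)=0$ and $S$ is an admissible test set in \eqref{AD}.

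For the ``if'' direction, assume $(\xi_n)_n$ is uniformly distributed mod $1$. I would first verify \eqref{AD} with $B=S$: the quantity $\#\{n : z_n\in RS\}$ is the difference of two counts of the shape $\#\{n\le M : \xi_n\in[a,b)+\ZZ\}$, taken at $M=\lfloor R^2 r_2^2\rfloor$ and $M=\lfloor R^2 r_1^2\rfloor$. Applying \eqref{UD} to each and subtracting yields $(b-a)(r_2^2-r_1^2)R^2+o(R^2) = \pi^{-1}\vol(S)\cdot R^2+o(R^2)$. To pass from sectors to arbitrary bounded $B$ with $\vol(\partial B)=0$, I would use the standard Jordan sandwich: a sufficiently fine polar grid produces finite unions $S^-\subset B\subset S^+$ of annular sectors with $\vol(S^+\setminus S^-)<\ve$, and the result for sectors then traps $R^{-2}\#\{n : z_n\in RB\}$ between $\pi^{-1}\vol(S^\pm)$, which coincide with $\pi^{-1}\vol(B)$ as $\ve\to 0$.

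For the ``only if'' direction, assume $(z_n)_n$ has asymptotic density $\pi^{-1}$ and apply \eqref{AD} directly to the disc sector $B = S_{0,1,a,b}$, which has volume $\pi(b-a)$, along the subsequence $R=\sqrt N$, $N\in\NN$. The condition $z_n\in\sqrt N\,B$ reduces to $n<N$ and $\xi_n\in[a,b)+\ZZ$, so \eqref{AD} transforms directly into \eqref{UD}, the harmless discrepancy between $n<N$ and $n\le N$ contributing only $O(1/N)$ after dividing by $N$. The one nontrivial step in the whole argument is the polar-coordinate Jordan approximation used in the forward direction; everything else is a bookkeeping translation between the two counting statements, made possible by the lucky coincidence that the squared radius $|z_n|^2$ is exactly the index $n$.
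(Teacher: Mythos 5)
Your argument is correct and follows essentially the same route as the paper: both directions are handled via annular sectors (your $S_{r_1,r_2,a,b}$ is the paper's $Z_{a,b,c,d}$), with the reverse implication read off from a disc sector and the forward implication obtained by differencing counts over nested sectors and then a Jordan-measurability sandwich. The only cosmetic difference is that you pass to the subsequence $R=\sqrt N$ in the ``only if'' direction, which is harmless since the limit in \eqref{AD} is over all $R\to\infty$.
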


\begin{proof}
Consider the sectors
\begin{equation}
Z_{a,b,c,d} = \{ z\in\CC : \tfrac{1}{2\pi} \arg z \in [a, b) +\ZZ ,\;  c\leq |z|< d\}
\end{equation}
with $b-a\leq 1$. Note that for $R>0$ we have $RZ_{a,b,c,d}=Z_{a,b,Rc,Rd}$ and $\vol Z_{a,b,0,R}=(b-a) \pi R^2$. Hence, \eqref{AD} with $\rho=\pi^{-1}$ and the choice $B=Z_{a,b,0,1}$, implies \eqref{UD}. 

The reverse implication is a little more involved. Let us assume \eqref{UD} holds. Then, by the previous argument, \eqref{AD} holds for $B=Z_{a,b,0,d}$ for any $d>0$. Taking the set $Z_{a,b,c,d}=Z_{a,b,0,d}-Z_{a,b,0,c}$ we see that \eqref{AD} also holds for $B=Z_{a,b,c,d}$ for any $0\leq c<d$. 
A standard argument for Jordan measurable sets shows that any bounded set $B$ with boundary of Lebesgue measure zero can be approximated arbitrarily well by unions of sectors of the form $Z_{a,b,c,d}$. This establishes that indeed \eqref{UD} implies \eqref{AD}, and our claim is proved. 
\end{proof}

Let us now turn to the question under which conditions on $(\xi_n)_n$ the sequence $(z_n)_n$ forms a Delone set. We say a point set is {\em uniformly discrete} if any two points are separated by a given minimum distance, and {\em relatively dense} if there is a given radius so that every ball of that radius contains at least one point. A {\em Delone set} is a point set that satisfies both of these properties. 

Given $M$ ordered real numbers $\zeta_1\leq \cdots\leq\zeta_M$ in the unit interval $[0,1]$, we define the corresponding gaps by $\delta_j=\zeta_{j+1}-\zeta_j$ for $j=1,\ldots,M-1$. We furthermore set $\delta_M=\zeta_1+1-\zeta_M$, which represents the gap between $\zeta_1$ and $\zeta_M$, after gluing the end points of the interval $[0,1]$. The numbers $\delta_1,\ldots,\delta_M$ thus describe the gaps of $\zeta_1,\ldots,\zeta_M$ mod $1$. 

Recall that the {\em fractional part} of a real number $x$ is defined by $x-\lfloor x \rfloor$, where $\lfloor x \rfloor$ is the largest integer less than or equal to $x$. Given $(\xi_n)_n$, $h>0$ and $R>0$, we denote by $g_R^h$  the minimal gap mod $1$ (as defined above) between the fractional parts of the elements $\xi_n$ with $n$ ranging over $R^2\leq n< (R+h)^2$. 
Note that for large $R$ (and $h$ fixed) there are approximately $2hR$ such elements, and hence the average gap size between the fractional parts is about $1/2hR$. 
We have the following two facts.

\begin{prop} \label{UDRD}
Let $(\xi_n)_n$ be a sequence of real numbers and let $(z_n)_n$ be the corresponding sequence given by $z_n = \sqrt{n}\; \e^{2\pi\i \xi_n}$. Then:
\begin{enumerate}
\item[(i)] $(z_n)_n$ is uniformly discrete if and only if there exists $h>0$ such that $$\inf_{R\geq 1} \, R g_R^h >0.$$
\item[(ii)] $(z_n)_n$ is relatively dense if and only if there exists $h>0$ such that $$\sup_{R\geq 1} \, R g_R^h <\infty.$$
\end{enumerate}
\end{prop}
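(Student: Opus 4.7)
The key tool will be the chord-length identity
\[
|z_n - z_m|^2 = (\sqrt{n} - \sqrt{m})^2 + 4\sqrt{nm}\,\sin^2\bigl(\pi(\xi_n - \xi_m)\bigr),
\]
together with the elementary estimate $\sqrt m - \sqrt n = (m-n)/(\sqrt m + \sqrt n)$. These decouple each Euclidean distance into a radial and a tangential contribution and reduce the metric structure of $(z_n)_n$ inside the annulus $R\le|z|<R+h$ to the mod-$1$ distribution of the fractional parts $\{\xi_n\}$ for $n\in[R^2,(R+h)^2)$.

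For sufficiency in (i), take $n\ne m$: either $|\sqrt n-\sqrt m|\ge h$, in which case $|z_n-z_m|\ge h$ outright, or both indices lie in a common annulus with $R:=\min(\sqrt n,\sqrt m)\ge 1$. Under the hypothesis $\inf R g_R^h\ge c>0$, the circular distance between the two fractional parts is at least $g_R^h$; since at least two points sit in the annulus one has $g_R^h\le 1/2$, so the bound $\sin(\pi x)\ge 2x$ on $[0,\tfrac12]$ combined with $\sqrt{nm}\ge R^2$ yields $|z_n-z_m|\ge 4Rg_R^h\ge 4c$. For the converse, if $(z_n)_n$ is uniformly discrete with separation $d$, choose any $h<d$; were $\inf R g_R^h=0$ one could locate $R\ge 1$ and a pair $(n,m)$ realising the minimum gap in the annulus with
\[
|z_n-z_m|^2 \le h^2 + 4\pi^2(1+h/R)^2(Rg_R^h)^2 < d^2,
\]
contradicting the prescribed separation.

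Part (ii) will follow the same template, now linking coverage of $\CC$ by balls of fixed radius to the appropriate extremal gap of the fractional parts in each annulus. For sufficiency, any target $z_0=R_0\e^{2\pi\i\theta_0}$ admits an index $n\in[R_0^2,(R_0+h)^2)$ whose fractional part lies within $O(1/R_0)$ of $\theta_0$, and the chord identity then bounds $|z_n-z_0|$ by a constant independent of $R_0$. For necessity, relative density with radius $\rho$ forces every polar window in the annulus of width $2\rho$ about $R$ to capture a point, so the relevant gap in $[R^2,(R+2\rho)^2)$ is of size $O(\rho/R)$. The subtlest step is the converse of (i): the hypothesis only delivers a single close angular pair per annulus, not one with small radial offset, so the chord identity is essential — the automatic $h^2$ bound on the radial contribution is precisely what forces the choice $h<d$.
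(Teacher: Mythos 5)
Your part (i) is correct and follows essentially the paper's own route: work annulus by annulus in $A_R^h=\{R\le|z|<R+h\}$, separate the radial and tangential contributions to $|z_n-z_m|$, and convert angular separation to Euclidean separation via $\sin(\pi x)\ge 2x$ on $[0,\tfrac12]$ in one direction and $\sin(\pi x)\le \pi x$ in the other. Your chord-length identity is a tidy way to package this, and your explicit case split (either $|\sqrt n-\sqrt m|\ge h$, or both indices lie in the common annulus with $R=\min(\sqrt n,\sqrt m)$) cleanly handles pairs that do not sit in one fixed annulus, a point the paper leaves implicit. No complaints there.

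Part (ii) has a genuine problem. The quantity $g_R^h$ is defined as the \emph{minimal} gap of the fractional parts over $R^2\le n<(R+h)^2$, and your sufficiency step --- ``any target $z_0=R_0\e^{2\pi\i\theta_0}$ admits an index $n\in[R_0^2,(R_0+h)^2)$ whose fractional part lies within $O(1/R_0)$ of $\theta_0$'' --- does not follow from $\sup_{R\ge1}Rg_R^h<\infty$ for the minimal gap: a small minimal gap only says that \emph{some} two fractional parts are close to each other, not that the fractional parts are $O(1/R)$-dense on the circle. With the minimal-gap reading the ``if'' direction of (ii) is in fact false: for $\xi_n\equiv 0$ every annulus contains several points whose fractional parts all coincide, so $g_R^h=0$ and $\sup_{R\ge 1}Rg_R^h=0<\infty$, yet $(z_n)=(\sqrt n)$ lies on a single ray and is not relatively dense in $\CC$. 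What makes your covering argument (and the proposition) work is a bound on the \emph{maximal} gap; that this is the intended reading is confirmed by the proof of Proposition 3, where the author establishes an upper bound of order $1/R$ on \emph{all} gaps $\delta_1,\dots,\delta_M$, i.e.\ precisely a maximal-gap bound. You gesture at this with the phrase ``the appropriate extremal gap,'' but you then quote and invoke the hypothesis verbatim with $g_R^h$; you must say explicitly that (ii) is to be read with the maximal gap (flagging the discrepancy with the printed definition), and then run your covering estimate off that bound, e.g.
\[
|z_n-z_0|^2\le h^2+4(R_0+h)^2\sin^2(\pi G_{R_0}^h)\le h^2+4\pi^2(1+h)^2\bigl(R_0G_{R_0}^h\bigr)^2
\]
with $G_{R_0}^h$ the maximal gap, plus a separate (trivial, finitely many cases) treatment of targets with $|z_0|<1$. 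Your necessity argument for (ii) is unaffected, since a maximal-gap bound implies the corresponding minimal-gap bound.
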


\begin{proof}
Consider the annulus $A_R^h=\{ z\in\CC: R \leq |z| < R+h \}$ with inner radius $R\geq 1$ and width $h>0$. To prove (i), let us first assume that there exists $h>0$ such that $\inf_{R\geq 1} \, R g_R^h > 0$, and establish uniform discreteness. Set $\delta=\min\{\frac12,\inf_{R\geq 1} \, R g_R^h \}$ and note $0<\delta\leq 1/2$. Then all points $z_n$ in $A_R^h$ are separated by a distance at least $2R \sin(\pi\delta/R)$, which in turn is bounded below by $4 \delta$ since $2x\leq \sin(\pi x)$ for $0\leq x\leq 1/2$. 
This lower bound is independent of $R$, and hence $(z_n)_n$ is uniformly discrete. For the reverse implication in (i), assume $(z_n)_n$ is uniformly discrete, i.e.\ there is some $r>0$ so that the distance between any two points in $(z_n)_n$ is greater than $r$. If these points are in the same annulus $A_R^h$ with $h<r$, then the difference of their arguments is bounded below by the smaller of the two numbers $(r-h)/R$ and $r/(R+h)$. Thus $2\pi R g_R^h\geq \min\{ r-h, r/(1+h)\}$ for all $R\geq 1$. For the choice $h=r/2$, say, we therefore obtain $\inf_{R\geq 1} R g_R^h>0$, which completes the proof of claim (i). The proof of statement (ii) follows from a similar argument.
\end{proof}

Let us now turn to our example $\xi_n=\alpha\sqrt{n}$, with $\alpha>0$ arbitrary. We will show that in this case the following holds.

\begin{prop}\label{Delo}
Let $\alpha>0$ and $\xi_n=\alpha\sqrt{n}$. Then $(z_n)_n$ is a Delone set with asymptotic density $\pi^{-1}$.
\end{prop}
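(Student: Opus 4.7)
The sequence $\xi_n=\alpha\sqrt n$ is uniformly distributed mod $1$ by the classical theorem of Weyl cited in the excerpt just after \eqref{UD} (the case $\beta=\tfrac12$ of $\alpha n^\beta$ with $\beta>0$ non-integer), so the asymptotic density $\pi^{-1}$ follows at once from Proposition \ref{Adens}. It remains to establish uniform discreteness and relative density.

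For uniform discreteness, I would apply Proposition \ref{UDRD}(i) with the width $h=1/(2\alpha)$, chosen so that $\alpha h=\tfrac12$. On the range $n\in[R^2,(R+h)^2)$ the sequence $\alpha\sqrt n$ is strictly increasing with total span at most $\alpha h=\tfrac12$, and therefore its fractional parts lie on $\RR/\ZZ$ in an arc of length at most $\tfrac12$. Every cyclic gap is then either a consecutive difference
\[
\alpha\bigl(\sqrt{n+1}-\sqrt n\bigr)=\frac{\alpha}{\sqrt{n+1}+\sqrt n}\;\geq\;\frac{\alpha}{2(R+h)},
\]
or the single complementary gap, which has length at least $\tfrac12$. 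Hence $g_R^h\geq\alpha/(2(R+h))$ and $\inf_{R\geq 1}R\,g_R^h\geq\alpha/(2(1+h))>0$, so Proposition \ref{UDRD}(i) gives uniform discreteness.

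Relative density is the more subtle step, and the one I expect to be the main obstacle: a purely gap-based argument becomes awkward once one takes $h$ large enough for the span $\alpha h$ to exceed $1$, because the cyclic arrangement of the fractional parts then depends sensitively on the value of $\alpha R$ modulo $1$. I would instead argue geometrically using the spiral $\Gamma=\{re^{2\pi i\alpha r}:r>0\}$, which contains every $z_n$ at parameter $r=\sqrt n$. Its arms are spaced by $1/\alpha$ in the radial direction, so for any $w=\rho e^{i\theta}$ with $\rho\geq 1+1/(2\alpha)$, the integer $j$ minimising $|r^*-\rho|$ with $r^*:=\theta/(2\pi\alpha)+j/\alpha$ gives $|w-r^*e^{i\theta}|=|\rho-r^*|\leq 1/(2\alpha)$ and $r^*\geq 1$. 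Next choose an integer $n$ with $|\sqrt n-r^*|\leq 1/(2r^*)$, which is possible because consecutive values of $\sqrt n$ near $r^*$ differ by $1/(\sqrt{n+1}+\sqrt n)\leq 1/r^*$. The identity
\[
|z_n-r^*e^{i\theta}|^2=(\sqrt n-r^*)^2+4\sqrt n\,r^*\sin^2\bigl(\pi\alpha(\sqrt n-r^*)\bigr),
\]
together with $|\sin x|\leq|x|$, bounds the right-hand side by $1/(4(r^*)^2)+\pi^2\alpha^2$, so $|z_n-r^*e^{i\theta}|$ is bounded uniformly in $r^*\geq 1$. A triangle inequality then bounds $|z_n-w|$ uniformly as $|w|\to\infty$, and the bounded region $\rho<1+1/(2\alpha)$ is handled by a finite check using $z_1,z_2,\ldots$. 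The main technical difficulty is precisely this discrete-to-continuous passage from $\Gamma$ to the integer-indexed points $z_n$, and it is resolved by the trigonometric identity above, which cleanly decouples the radial and angular contributions to the squared distance.
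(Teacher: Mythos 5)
Your proof is correct, and its first two parts coincide with the paper's: the density claim via Proposition \ref{Adens}, and uniform discreteness via the gaps of $\alpha\sqrt n$ over an annulus of width $h=1/(2\alpha)$. Your observation that the complementary gap exceeds $\tfrac12$ simply because all fractional parts lie in an arc of length less than $\tfrac12$ is in fact a bit cleaner than the paper's route, which estimates $\delta_M\geq 1-(M-1)\alpha/(2R)$ and then disposes of small $R$ by a finite check. Where you genuinely diverge is relative density. The paper stays inside the gap framework: it takes $h=1/\alpha$ \emph{exactly}, so the span $\xi_{m+M}-\xi_{m+1}<\alpha h=1$ is still strictly less than $1$ (the index interval is half-open), the points remain ordered mod $1$, and since the $M-1$ consecutive gaps each exceed $\alpha/(2(R+h))$ and $M-1\geq 2hR+h^2-1$, their sum is $1-O(1/R)$, forcing the wrap-around gap $\delta_M=1-(\delta_1+\cdots+\delta_{M-1})$ to be $O(1/R)$ as well; hence \emph{every} gap is $O(1/R)$ and relative density follows from Proposition \ref{UDRD}(ii). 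So the obstacle you anticipated (needing $\alpha h>1$) never arises: $\alpha h=1$ suffices. Your alternative --- finding, for each far-away $w$, a point of the spiral on the ray through $w$ at radial distance at most $1/(2\alpha)$, and then a $z_n$ within bounded distance of that spiral point via the polar identity for $|z_n-r^*\e^{\i\theta}|^2$ --- is a valid direct proof that bypasses Proposition \ref{UDRD}(ii) entirely. Two harmless slips: in the bound $4\sqrt n\,r^*\sin^2(\pi\alpha(\sqrt n-r^*))\leq \pi^2\alpha^2\sqrt n/r^*$ one needs $\sqrt n/r^*\leq 1+1/(2(r^*)^2)\leq 3/2$, so the constant is $\tfrac32\pi^2\alpha^2$ rather than $\pi^2\alpha^2$; and the discreteness conclusion should read $\inf_{R\geq1} Rg_R^h\geq\min\{\alpha/(2(1+h)),\tfrac12\}$. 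Neither affects the result. What your geometric route buys is an explicit Delone radius and independence from the equivalence in Proposition \ref{UDRD}(ii), whose statement is phrased in terms of the minimal gap while relative density really requires control of the maximal gap --- which is what both your argument and the paper's actual estimates deliver; what it costs is the extra computation of passing from the continuous spiral to the integer-indexed points.
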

 
\begin{proof}
The asymptotic density follows from Proposition \ref{Adens}, since the sequence $(\xi_n)_n$ is uniformly distributed mod $1$ (this is a special case of the examples discussed in the introduction). To prove the upper and lower bounds on the gaps mod $1$ needed in Proposition \ref{UDRD}, let us label the integers $n$ in the range $R^2\leq n< (R+h)^2$ by $m+1,\ldots,m+M$. Note that, since $n$ ranges over a half-open interval of length $2hR + h^2$, the number $M$ of points in that interval satisfies the bound
\begin{equation}\label{Mrange}
2hR + h^2 \leq M < 2hR+h^2+1.
\end{equation}
We have $\xi_{m+M}-\xi_{m+1}<\alpha (R+h)-\alpha R=\alpha h$. Any choice $h\leq 1/\alpha$ thus ensures that $\xi_{m+M}-\xi_{m+1}<1$. This in turn implies that the the numbers $\xi_{m+1},\ldots, \xi_{m+M}$ are ordered mod $1$ and hence the gaps are  given by $\delta_j=\xi_{m+j+1}-\xi_{m+j}$ for $j=1,\ldots,M-1$, and $\delta_M=\xi_{m+1}+1-\xi_{m+M}$. Note in particular that $\delta_M>0$. Examples of annuli of width $h=1/\alpha$ are illustrated in Figure \ref{fig2}, where one can join the dots in each annulus through a full rotation. We have, for $j=1,\ldots,M-1$,
\begin{equation}
\delta_j=\frac{\alpha}{\sqrt{m+j+1}+\sqrt{m+j}} ,
\end{equation}
and so
\begin{equation}\label{deltaj}
\frac{\alpha}{2(R+h)} < \delta_j \leq  \frac{\alpha}{2R} ,
\end{equation}
which provides the required uniform upper and lower bounds for the first $M-1$ gap sizes for any positive $h\leq 1/\alpha$. (In fact, we see that for $R$ large, the rescaled gap $R \delta_j$ is approximately $\alpha/2$, i.e., independent of $j$ and $R$. This explains the regular spacing in each annulus in Figure \ref{fig2}.)

We also need the corresponding bounds for the gap $\delta_M$, which we write as $\delta_M=1-(\delta_1+\cdots+\delta_{M-1})$. In view of \eqref{deltaj} and \eqref{Mrange}, we have for $h=1/\alpha$
\begin{equation}
\delta_M \leq  1-(M-1)\frac{\alpha}{2(R+h)} \leq \frac{\alpha+\alpha^{-1}}{2(R+h)} < \frac{\alpha+\alpha^{-1}}{2R},
\end{equation}
and hence $R \delta_M$ also has a uniform upper bound. We furthermore have the lower bound 
\begin{equation}
\delta_M \geq  1- \frac{(M-1)\alpha}{2R} > 1-h\alpha-\frac{h^2 \alpha}{2R}  .
\end{equation}
Any choice of $h<1/\alpha$ will work for our purposes; let us take $h=1/2\alpha$. Then, for all $R\geq 2+ h/2$ we have $R \delta_M > 1$. In the remaining range $1\leq R< 2+ h/2$ there are only finitely many values for $\delta_M$. This establishes the desired uniform lower bound for all $R\geq 1$.

In summary, we have shown that (i) $\inf_{R\geq 1} \, R g_R^h >0$ for $h=1/2\alpha$, and (ii) $\sup_{R\geq 1} \, R g_R^h <\infty$ for $h=1/\alpha$. The claim now follows from Proposition \ref{UDRD}. 
\end{proof}

It is interesting to note that, unlike the case of $n\alpha$ mod $1$ considered in \cite{Akiyama19} (with $\alpha$ badly approximated by rationals), the overall fine-scale distribution of $\alpha\sqrt n$ mod $1$ does not display a bounded gap phenomenon. For $\alpha^2\in\QQ$, the gap distribution is well understood and indeed arbitrarily large and small gaps (on the scale of the average gap) will occur \cite{Elkies04}. In the case of typical $\alpha^2\notin\QQ$, the fine-scale statistics are conjectured to follow a Poisson law, with an exponential gap distribution. This conjecture is based on numerical and heuristic evidence \cite{Elkies04,Marklof07}, but its proof is still an open problem. Remarkably, the two-point statistics of $\xi_n=\sqrt n$ mod $1$ is Poisson \cite{ElBaz15} (if one removes the density-zero subset of indices $n$ that are perfect squares), even though the gap distribution and other local statistics are not Poisson \cite{Elkies04}.


\end{document}